\documentclass[a4paper,draft,leqno,12pt]{article}

\textheight 23cm
\textwidth 16cm
\topmargin -12pt
\evensidemargin -0,04cm
\oddsidemargin -0,04cm
\marginparsep 0pt
\marginparwidth 60pt
\headsep 8,1mm
\headheight 14,5pt
\footskip 12mm
\topskip 0pt


\usepackage{amsmath,bm}
\usepackage{amscd}
\usepackage{amssymb}
\usepackage{enumerate}
\usepackage{indentfirst}
\usepackage{latexsym}
\usepackage{multicol}
\usepackage{theorem}



\newcommand{\AAA}{{\cal A}}

\makeatletter\@addtoreset{equation}{section}\makeatother


\newtheorem{definition}{Definition}
\newtheorem{lemma}{Lemma}{\theorembodyfont{\rmfamily}
\newtheorem{remark}{Remark}}\newtheorem{theorem}{Theorem}

\newenvironment{proof}{\textit{Proof. }}{\hfill$\Box$}

\newcommand{\dx}{{\rm d} x}

\newcommand{\dt}{{\rm d} t }

\newcommand{\ep}{\varepsilon}

\newcommand{\Z}{\mbox{\F Z}}


\newcommand{\ds}{\displaystyle}
\newcommand{\beq}[1]{\begin{equation} \label{#1}\ds}
\newcommand{\eeq}{\end{equation}}
\newcommand{\bml}[1]{\beq{#1} \begin{array}{c}\ds}
\newcommand{\eml}{\end{array}\eeq}
\newcommand{\beqq}{\begin{equation*}\ds}
\newcommand{\eeqq}{\end{equation*}}
\newcommand{\bmll}{\beqq \begin{array}{c}\ds}
\newcommand{\emll}{\end{array}\eeqq}
\renewcommand{\div}{{\rm div}\,}

\newcommand{\abs}[1]{\ensuremath{\left| #1 \right|}}


\def \fii{\varphi}
\def \de{\partial}

\def \Om{\Omega}

\def \ep{\varepsilon}

\def \AA{\mathcal{A}}
\def \AAA{\mathbb{A}}
\def \MM{\mathcal{M}}

\def \I{\mathbf{I}}

\newcommand{\R}{\mathbb{R}}

\def \Z{\mathbf{Z}}

\def \sil{\rightarrow}

\title{$\AA$-free Rigidity and Applications to the Compressible Euler System}
\author{Elisabetta Chiodaroli, Eduard Feireisl,\\ Ond\v{r}ej Kreml, and Emil Wiedemann}

\begin{document}

\maketitle

\begin{abstract}
Can every measure-valued solution to the compressible Euler equations be approximated by a sequence of weak solutions? 
We prove that the answer is negative: Generalizing a well-known rigidity result of Ball and James to a more general situation, we construct
an explicit measure-valued solution for the compressible Euler equations which can not be generated by a sequence of distributional solutions. We also give an abstract necessary condition for measure-valued solutions to be generated by weak solutions, relying on work of Fonseca and M\"uller. 

This difference between weak and measure-valued solutions in the compressible case is in contrast with the incompressible situation, where every measure-valued solution can be approximated by weak solutions, as shown by Sz\'ekelyhidi and Wiedemann.
\end{abstract}

\section{Introduction}\label{s:I0}
The compressible Euler equations 
\begin{equation}\label{eq:CE}
\begin{aligned}
 \de_t \rho + \div_x (\rho v) &= 0 \\
 \de_t (\rho v) + \div_x (\rho v \otimes v) + \nabla_x p(\rho) &= 0
\end{aligned}
\end{equation}
describe the motion of a perfect fluid whose state is determined by the distribution of the
fluid velocity $v$ and of the mass density $\rho$. The pressure $p$ is given as a function of the density 
and satisfies $p(\rho)\geq0$, $p'(\rho)>0$.
These equations were first formulated by Euler in 1757, but we are still far from a complete understanding 
of the phenomena described by the Euler equations.

If we consider the Cauchy problem for the compressible Euler equations \eqref{eq:CE} on $Q = [0,T] \times\Omega$ (for a, say, bounded smooth domain $\Omega\subset\R^3$) with, say, bounded initial data
\begin {equation*}
(\rho,v)(0,\cdot) = (\rho_0,v_0)
\end{equation*}
then we say that a pair $(\rho, v)\in L^1 ([0,T]\times \Omega)$ is a \textit{weak solution} to \eqref{eq:CE} (together with a natural boundary condition $v \cdot n = 0$ on $\partial\Omega$, $n$ being the unit outer normal to $\partial\Omega$) if it satisfies
the following equations:
\begin{equation*}
\int_0^T \int_\Omega \de_t \psi \rho + \nabla_x\psi \cdot \rho v \dx\dt + \int_\Omega \psi(0,x) \rho_0(x) \dx = 0,
\end{equation*}
\begin{align*}
&\int_0^T \int_\Omega \de_t\fii \cdot \rho v + \nabla_x\fii : \rho v \otimes v + \div_x\fii p(\rho) \dx\dt \\
\nonumber + &\int_\Om \fii(0,x)\cdot \rho_0(x) v_0(x) \dx = 0
\end{align*}
for all $\psi \in C^\infty_c([0,T) \times \Omega)$ and all $\fii\in C^\infty_c([0,T) \times \Omega;\R^3)$ (in particular, it is part of the definition that all the integrals exist, which is certainly the case for solutions with finite energy).
The concept of weak solutions allows to deal with shock formation which may occur even for smooth data; however weak solutions are well known to be non-unique.
In the literature many admissibility criteria have been proposed to restore uniqueness and the most classical one relies on suitable entropy conditions.

Entropy solutions have been widely accepted as the suitable solution framework for
systems of conservation laws in several space dimensions, for which the compressible Euler equations \eqref{eq:CE} are the most paradigmatic example. However, in space dimensions greater than one the existence of entropy weak solutions is not known, so that the weaker concept of measure-valued solution was introduced by Neustupa~\cite{Neu}, who also proved the existence of such solutions. Previously, the notion of measure-valued solutions for systems of conservation laws had been introduced by DiPerna in the seminal paper \cite{Di}. Measure-valued solutions are only Young measures, instead of integrable functions. 

On the one hand, this allows to characterize complex phenomena such as oscillations and concentrations. On the other hand, measure-valued solutions have been criticized for being too weak, as is apparent from their obvious non-uniqueness (but see the weak-strong uniqueness results in~\cite{BrDLSz, GwSwWi}). However, recent results by the first three authors jointly with De Lellis \cite{Ch,ChDlKr, ChFeKr}
have demonstrated that for the compressible Euler equations \eqref{eq:CE} even entropy solutions (weak solutions satisfying a suitable entropy inequality) may not be unique, 
thus raising anew the problem of a correct notion of solutions for \eqref{eq:CE}.

In the recent paper \cite{FKMT}, the authors show that current numerical schemes may not necessarily converge to an entropy solution of systems of conservation laws 
in several space dimensions, when the mesh is refined.
Instead, entropy measure-valued solutions are suggested as 
an appropriate solution paradigm for systems of conservation laws, on the base of a large number of numerical experiments.

In the context of hyperbolic conservation laws in one space dimension, when the equation is complemented with a suitable entropy condition, the Tartar-Murat theory of compensated compactness applies (see \cite{Di2})
and oscillations in weakly converging sequences of solutions can be excluded. The situation is very different for the incompressible Euler equations in several space dimensions: In \cite{SzWi}, Sz\'ekelyhidi and the last author showed that any measure-valued solution
can be generated by a sequence of exact solutions. This means that, in any space dimension, measure-valued solutions and weak solutions are substantially the same, thus leading to a very large set of weak solutions.

Motivated by this series of results, we aimed at understanding the corresponding problem for the compressible Euler equations \eqref{eq:CE} (we treat here only the three-dimensional case).
This issue may prove interesting both in view of the solution theory of the compressible Euler system \eqref{eq:CE} itself and
for possible selection criteria among measure-valued solutions of incompressible Euler based on low Mach number limits.
In this paper we prove (Theorem~\ref{t:main1}) that any measure-valued solution to the compressible Euler system which arises 
as a limit of a sequence of
weak solutions must satisfy a Jensen-type inequality, which is reminiscent of analogous inequalities in the so called $\AA$-free setting (cf. \cite{FoMu} and \cite{KiPe, KiPe2}). To our knowledge this is the first time that such a Jensen inequality appears in the context of fluid dynamics.

Moreover, we construct
an explicit measure-valued solution for the compressible Euler equations which can
not be generated by a sequence of distributional solutions (Theorem~\ref{t:main2}). 
Thus we show a substantial difference between weak and measure-valued solutions for the compressible Euler equations \eqref{eq:CE} 
in contrast with the situation for incompressible Euler \cite{SzWi}. 
Our construction relies on a rigidity result for so-called $\AA$-free sequences (Theorem~\ref{lemma1} below) in the spirit of compensated compactness~\cite{Ta, Mu}, which generalizes a well-known result of Ball and James (Proposition 2 in~\cite{BaJa}, see also Lemma 2.2 in~\cite{Mue}) on sequences of gradients. Rigidity results of this type play an important role in nonlinear elasticity, but have not yet been used in fluid mechanics. Our generalization to the $\AA$-free setting will not come as a surprise to experts, but we have not been able to find it explicitly stated in the literature. Under the constant rank assumption, our Theorem~\ref{lemma1} follows from Theorem 4 in~\cite{Ri}, but we do not require this assumption and our proof is simple and self-contained. We thus hope it will be found useful independently of our specific application in the context of the Euler equations.

Finally, let us briefly comment on the converse of Theorem~\ref{t:main1}: Is the Jensen inequality sufficient for a measure-valued solution to be generated by weak solutions? In the incompressible case, this was shown to be true~\cite{SzWi}, but the proof relies on convex integration techniques which do not seem to be flexible enough to transfer to the compressible case.  

The paper is organized as follows: Sections~\ref{sec:mvs},~\ref{sec:Afree} and~\ref{sec:wave} contain preliminary material on measure-valued solutions and linear differential constraints. In Section~\ref{s:robustness} we state and prove our $\AA$-free rigidity theorem. Section 4 then gives the mentioned applications of the general theory to the compressible Euler equations.   

\textbf{Acknowledgments.} The authors would like to thank L\'aszl\'o Sz\'ekelyhidi Jr.\ for helpful discussions. 
The research of E.\ F. leading to these results has received funding from the European Research Council under the European Union's Seventh Framework Programme (FP7/2007-2013)/ ERC Grant Agreement 320078. The Institute of Mathematics of the Academy of Sciences of the Czech
Republic is supported by RVO:67985840.
The work of O.\ K. was supported by grant of GA\v CR (Czech Science Foundation) GA13-00522S in the general framework of RVO:67985840.

\section{Measure-valued solutions and subsolutions}\label{sec:mvs}
We denote by $\MM(\R^d)$ the space of finite Radon measures on $\R^d$ and by $\MM^1(\R^d)$ the subset of probability measures.
For a subset $\Omega\subset \R^N$, we also denote by $L^\infty_{w} (\Omega; \MM^1(\R^d))$ the space of weakly$^*$-measurable maps from $\Omega$ into $\MM^1(\R^d)$, meaning that for each Borel function $f: \R^d\rightarrow \R$, the map
\begin{equation*}
x\mapsto \langle\nu_x,f\rangle := \int_{\R^d} f(z){\rm d}\nu_x(z)
\end{equation*}
is measurable (with respect to Lebesgue measure).\\
A \textit{Young measure} on a set $\Omega\subset \R^N$ is a map $\nu\in L^\infty_{w} (\Omega; \MM^1(\R^d))$ which assigns to almost every point $x \in \Omega$ 
a probability measure $\nu_{x}\in \MM^1(\R^d)$ on the phase space $\R^d$.
We note in passing that $\langle\nu_x,f\rangle := \int_{\R^d} f(z){\rm d}\nu_x(z)$ is the expectation of $f$ with respect to the probability measure $\nu_{x}$.

In the context of measure-valued solutions to evolution equations, one of the variables is interpreted as time and the domain takes the form $\Omega=[0,T]\times\tilde{\Omega}$ for some space domain $\tilde{\Omega}$ (we will omit the tilde as long as no confusion is to be expected). 
In particular, for the three-dimensional Euler equations, we have $N=1+3$ and $d=4$. We will then denote the state variables by $\xi\in \R^4$ and more precisely we introduce the following notation in order to formalize the definition of measure valued solution to \eqref{eq:CE}:
\begin{align*}
\xi &= [\xi_0,\xi'] = [\xi_0,\xi_1, \xi_2, \xi_3] \in \R^+ \times \R^3 \\
\langle\nu_{t,x},\xi_0\rangle &= \overline{\rho} \\
\langle\nu_{t,x}, \sqrt{\xi_0}\xi'\rangle &= \overline{\rho v} \\
\langle\nu_{t,x}, \xi'\otimes\xi'\rangle &= \overline{\rho v \otimes v} \\
\langle\nu_{t,x}, p(\xi_0)\rangle &= \overline{p(\rho)}.
\end{align*}
One should think of $\xi_0$ as the state of the density $\rho$ and $\xi'$ as the state of $\sqrt{\rho}v$.

\begin{definition}[Measure-valued solution]
A \emph{measure-valued solution to the compressible Euler equations} \eqref{eq:CE} is a Young measure $\nu_{t,x}$ on $\R^+ \times \R^3$ with parameters in $[0,T] \times \Omega$ which satisfies the Euler equations in an average sense, i.e.
\begin{equation*}
\int_0^T \int_\Omega \de_t \psi \overline{\rho} + \nabla_x\psi \cdot \overline{\rho v} \dx\dt + \int_\Omega \psi(0,x) \rho_0(x) \dx = 0
\end{equation*}

\begin{align*}
&\int_0^T \int_\Omega \de_t\fii \cdot \overline{\rho v} + \nabla_x\fii : \overline{\rho v \otimes v} + \div_x\fii \overline{p(\rho)} \dx\dt \\
\nonumber + &\int_\Om \fii(0,x)\cdot \rho_0(x) v_0(x) \dx = 0
\end{align*}
for all $\psi \in C^\infty_c([0,T) \times \Omega)$ and all $\fii\in C^\infty_c([0,T) \times \Omega;\R^3)$.
\end{definition}
Again, it is understood that all the integrals have to be well-defined and finite. Note that this definition involves only the ``classical" Young measure and ignores effects of concentration (confer the notion of ``generalized" Young measure as established in~\cite{DPMa}). This is mainly for reasons of simplicity, and because our counterexample (Theorem~\ref{t:main2}) excludes an approximating sequence even if it forms concentrations (cf.\ Remark~\ref{concentration} below).

Observe that every measurable function $u: \Omega \rightarrow \R^d$ naturally gives rise to a Young measure by defining $\nu_x:= \delta_{u(x)}$: such Young measures are called \textit{atomic}. 
Thus every weak solution of \eqref{eq:CE} defines an atomic measure-valued solution.

If $\{z_n\}$ is a sequence of measurable functions $\Omega\to\R^d$ we say that $\{z_n\}$ \textit{generates} the Young measure $\nu$ if for all bounded Carath\'eodory functions $f: \Omega \times \R^d \sil \R$
\begin{equation*}
\lim_{n\sil\infty} \int_{\Omega} f(x,z_n(x)) \varphi(x) \dx = \int_{\Om} \langle \nu_x,f(x,\cdot)\rangle \varphi(x)  \dx
\end{equation*}
for all $\varphi \in L^1(\Omega)$.
By the fundamental theorem of Young measures any suitably bounded sequence of Young measures has a converging subsequence in the above sense. In particular, any sequence of functions bounded in $L^p(\Omega)$ (for any $p\geq1$) generates, up to a subsequence, some Young measure.

Our first result gives a necessity condition, in the form of a Jensen-type inequality, for a measure-valued solution to \eqref{eq:CE} in order to be generated by a sequence of weak solutions.
This Jensen-type inequality involves a suitable notion of subsolutions for our system \eqref{eq:CE}.
More precisely we deal with solutions to the related linear system
\begin{equation}\label{eq:LS}
\begin{aligned}
 \de_t \rho + \div_x m &= 0 \\
 \de_t m + \div_x U + \nabla_x q &= 0,
\end{aligned}
\end{equation}
which we call \textit{subsolutions}. The linear system \eqref{eq:LS} is obtained from \eqref{eq:CE} by first changing the state variable from $(\rho, v)$ to $(\rho, m:=\rho v)$, where $m$ is the 
linear momentum, and then by replacing every nonlinearity appearing in
the original compressible Euler equations \eqref{eq:CE} by a new variable: in particular $U\in S^3_0 $ is a symmetric trace-free $3\times 3$ matrix which replaces the traceless 
part of the matrix $\rho v \otimes v= \frac{m\otimes m }{\rho}$ and $q$ encodes the pressure plus the term coming from the trace of $\rho v \otimes v$ (see \cite{DLSz, ChDlKr} for similar definitions).
Weak solutions to \eqref{eq:LS} are functions $(\rho, m, U, q)$ which satisfy \eqref{eq:LS} in the sense of distributions.

Accordingly we have to define measure-valued subsolutions. Similarly as above, we use the following notation:
\begin{align*}
[\zeta_0,\zeta',\Z,\tilde{\zeta}] &\in \R^+ \times \R^3 \times S^3_0 \times \R^+ \\
\langle\mu_{t,x},\zeta_0\rangle &= \overline{\rho} \\
\langle\mu_{t,x}, \zeta'\rangle &= \overline{m} \\
\langle\mu_{t,x}, \Z\rangle &= \overline{U} \\
\langle\mu_{t,x}, \tilde{\zeta}\rangle &= \overline{q}
\end{align*}

\begin{definition}[Measure valued subsolution]
A \emph{measure-valued solution to the linear system} is a Young measure $\mu_{t,x}$ on $\R^+ \times \R^3 \times S^3_0 \times \R^+$ with parameters in $[0,T] \times \Omega$ 
which satisfies the linear system \eqref{eq:LS} in an average sense, i.e.
\begin{equation*}
\int_0^T \int_\Omega \de_t \psi \overline{\rho} + \nabla_x\psi \cdot \overline{m} \dx\dt + \int_\Omega \psi(0,x) \rho_0(x) \dx = 0
\end{equation*}

\begin{align*}
&\int_0^T \int_\Omega \de_t\fii \cdot \overline{m} + \nabla_x\fii : \overline{U} + \div_x\fii \overline{q} \dx\dt \\
\nonumber + &\int_\Om \fii(0,x)\cdot m_0(x) \dx = 0
\end{align*}
for all $\psi \in C^\infty_c([0,T) \times \Omega)$ and all $\fii\in C^\infty_c([0,T) \times \Omega;\R^3)$.
\end{definition}

We also define, in analogy with~\cite{SzWi}, a lift of a Young measure from the space of solutions to the space of subsolutions. Indeed once we are given a measure-valued solution
to the compressible Euler equations \eqref{eq:CE} this defines naturally a corresponding measur--valued subsolution to the linear system \eqref{eq:LS}.

\begin{definition}[Lift]
Let $\nu_{t,x}$ be a measure valued solution to the Euler equations. Denote $Q: \R^+ \times \R^3 \mapsto \R^+ \times \R^3 \times S^3_0 \times \R^+$
\begin{equation*}
Q(\xi) := (\xi_0,\sqrt{\xi_0}\xi',\xi'\otimes\xi' - \frac 13 \abs{\xi'}^2\I,p(\xi_0) + \frac 13 \abs{\xi'}^2).
\end{equation*}
We define the \emph{lifted measure} $\tilde{\nu}_{t,x}$ as
\begin{equation*}
\langle\tilde{\nu}_{t,x},f\rangle := \langle\nu_{t,x},f \circ Q\rangle
\end{equation*}
for $f \in C_0(\R^+ \times \R^3 \times S^3_0 \times \R^+)$ and a.e. $(t,x)$.
\end{definition}

\section{Linearly constrained sequences}
\subsection{$\AA$--free setting and constant rank property}\label{sec:Afree}

The linear system \eqref{eq:LS} associated to the compressible Euler equations fits nicely into the so-called $\AA$-free framework for linear partial differential constraints, introduced by Tartar~\cite{Ta}. Consider a general linear system of $l$ differential equations in $\R^N$ written as
\begin{equation}\label{eq:operator def}
\AA z := \sum_{i=1}^N A^{(i)} \frac{\de z}{\de x_i} = 0,
\end{equation}
where $A^{(i)}$ ($i = 1,...,N$) are $l \times d$ matrices and $z: \R^N \sil \R^d$ is a vector-valued function. Next, we define the $l \times d$ matrix
\begin{equation*}
\AAA(w) := \sum_{i=1}^N w_i A^{(i)}
\end{equation*}
for $w \in \R^N$.

\begin{definition}[Constant rank]
We say that $\AA$ has the \emph{constant rank property} if there exists $r \in \mathbb{N}$ such that
\begin{equation*}
{\rm rank}\, \AAA(w) = r
\end{equation*}
for all $w \in \mathcal{S}^{N-1}$.
\end{definition}



The following theorem from \cite{FoMu} will be the cornerstone of our proof of Theorem~\ref{t:main1} below. In order to state it, we first recall the general definition of $\AA$-quasiconvex functions.

\begin{definition}[$\AA$-Quasiconvexity]
A function $f:\R^{d} \sil \R$ is said to be $\AA$-\emph{quasiconvex} if
\begin{equation}\label{eq:Aqc}
f(z) \leq \int_{(0,1)^N} f(z + w(x)) \dx
\end{equation}
for all $z \in \R^d$ and all $w\in C^{\infty}_{per}((0,1)^N;\R^d)$ such that $\AA w = 0$ and $\int_{(0,1)^N} w(x) \dx = 0$.
\end{definition}
Recall that a sequence $\{z_n\}$ is called \emph{$p$-equiintegrable} if the sequence $\{|z_n|^p\}$ is equiintegrable in the usual sense.
\begin{theorem}[Theorem 4.1 in \cite{FoMu}]\label{t:T1}
Let $1 \leq p < \infty$ and let $\{\nu_x\}_{x\in\Om}$ be a weakly measurable family of probability measures on $\R^d$. Let $\AA$ have the constant rank property. There exists a $p$-equi-integrable sequence $\{z_n\}$ in $L^p(\Omega;\R^d)$ that generates the Young measure $\nu$ and satisfies $\AA z_n = 0$ in $\Omega$ if and only if the following conditions hold:
\begin{itemize}
\item[(i)] there exists $z \in L^p(\Om;\R^d)$ such that $\AA z = 0$ and $z(x) = \langle\nu_x,{\rm id}\rangle$ a.e. $x \in \Om$;
\item[(ii)] \begin{equation*}
\int_{\Om} \int_{\R^d} \abs{w}^p {\rm d}\nu_x(w) \dx < \infty;
\end{equation*}
\item[(iii)] for a.e. $x \in \Om$ and all $\AA$-quasiconvex functions $g$ that satisfy $\abs{g(w)} \leq C(1+\abs{w}^p)$ for some $C > 0$ and all $w \in \R^d$ one has
\begin{equation}\label{eq:Jensen general}
\langle\nu_x,g\rangle \geq g(\langle\nu_x,{\rm id}\rangle).
\end{equation}
\end{itemize}
\end{theorem}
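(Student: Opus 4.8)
The plan is to prove the two implications of this biconditional separately: necessity by weak lower semicontinuity together with a localization argument, and sufficiency by an explicit construction of the generating sequence. For necessity, assume a $p$-equiintegrable $\AA$-free sequence $\{z_n\}$ generates $\nu$. Then $\{z_n\}$ is bounded in $L^p(\Om;\R^d)$ and, being $p$-equiintegrable (by the Dunford--Pettis theorem when $p=1$, by reflexivity when $p>1$), admits a subsequence with $z_n \sl z$ weakly in $L^p$. Since $\AA$ is a constant-coefficient differential operator, $\AA z_n = 0$ passes to the distributional limit, giving $\AA z = 0$; testing the generation property against $f = {\rm id}$ identifies $z(x) = \langle\nu_x,{\rm id}\rangle$, which is (i). Condition (ii) follows by applying the generation property to truncations of $f(w) = |w|^p$ together with lower semicontinuity of the norm. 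For (iii) I would invoke the central lower semicontinuity theorem of \cite{FoMu}: for $\AA$-quasiconvex $g$ of $p$-growth, $u \mapsto \intO{g(u)}$ is sequentially weakly lower semicontinuous along $p$-equiintegrable $\AA$-free sequences. Restricting to small balls and blowing up at a Lebesgue point of $x \mapsto \nu_x$ then localizes this inequality to the pointwise Jensen inequality \eqref{eq:Jensen general}.

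The substance of the converse lies in the homogeneous case $\nu_x \equiv \nu$, with barycenter $\bar z = \langle\nu,{\rm id}\rangle$ and finite $p$-th moment, and here I would argue by duality. The set $K$ of homogeneous Young measures with barycenter $\bar z$ that are generated by $p$-equiintegrable, periodic, $\AA$-free fields of zero mean is convex (by mixing two such sequences on complementary subregions) and weakly-$*$ closed (by a diagonal argument); moreover its support functional is computed by the $\AA$-quasiconvex envelope, in the sense that $\inf_{\mu\in K}\langle\mu,h\rangle = (Q_{\AA}h)(\bar z)$ for continuous $h$ of $p$-growth. Hence, if $\nu \notin K$, the Hahn--Banach theorem separates it by some such $h$, and replacing $h$ by $Q_{\AA}h$ produces an $\AA$-quasiconvex function violating \eqref{eq:Jensen general}; contrapositively, (ii)--(iii) force $\nu \in K$. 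The constant rank property enters decisively in realizing the oscillating fields underlying $K$: it furnishes a smooth, degree-zero Fourier multiplier projection $\mathbb P$ onto the kernel of $\AAA(w)$, bounded on $L^p$ for $1 < p < \infty$ by Mikhlin's theorem, which corrects an arbitrary oscillation into an exactly $\AA$-free field with controlled norm---the analogue, in this setting, of introducing a potential for a curl-free field as in the Ball--James argument. The endpoint $p = 1$, where $\mathbb P$ fails to be $L^1$-bounded, is recovered from the range $1 < p < \infty$ by a truncation argument.

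For $x$-dependent $\nu_x$ I would pass to the general case by approximation: using weak-$*$ measurability and the moment bound (ii), approximate $x \mapsto \nu_x$ in a suitable sense by maps constant on a fine partition of $\Om$ into small cubes, generate each cell by a rescaled periodic $\AA$-free oscillation supplied by the homogeneous step, superimpose these on the fixed barycenter field $z$ from (i), and apply $\mathbb P$ once more to absorb the intercell mismatches so that $\AA z_n = 0$ holds exactly. The step I expect to be the main obstacle is keeping the assembled sequence $p$-equiintegrable without altering the Young measure it generates; this is precisely the content of the decomposition (``$p$-equiintegrability'') lemma of \cite{FoMu}, which trades a merely $L^p$-bounded $\AA$-free sequence for a $p$-equiintegrable one generating the same measure. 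Obtaining uniform control of $\mathbb P$ as the cubes shrink, and again handling the non-reflexive endpoint $p = 1$, are the two places where the estimates are genuinely delicate rather than routine.
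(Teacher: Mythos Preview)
The paper does not prove this statement. Theorem~\ref{t:T1} is quoted verbatim from Fonseca and M\"uller \cite{FoMu} and used as a black box; the only accompanying text is ``The following theorem from \cite{FoMu} will be the cornerstone of our proof of Theorem~\ref{t:main1} below.'' There is therefore no proof in the paper against which to compare your proposal.

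For what it is worth, your outline is a fair high-level summary of the actual argument in \cite{FoMu}: necessity via weak lower semicontinuity plus a blow-up localization, and sufficiency via a Hahn--Banach separation in the homogeneous case followed by a patching construction, with the constant rank hypothesis entering through the bounded Fourier projection onto $\ker\AAA(\cdot)$. If you intend this as a self-contained proof rather than a sketch, the places that would need real work are exactly the ones you flag: the decomposition lemma that restores $p$-equiintegrability, the uniform bounds on the projection as the partition is refined, and the $p=1$ endpoint. But none of this is in the present paper, so strictly speaking there is nothing here for you to match.
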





\subsection{Wave analysis} \label{sec:wave}

When considering first order linear PDEs in the form 
\eqref{eq:operator def}, a special class of solutions are \textit{plane waves}, i.e.\ solutions of the form $z(x)=h(x\cdot\xi)\bar{z}$ with $h:\R \to \R$.
In order to identify such solutions, one has to solve the relation $\sum_{i=1}^N  \xi_i A^{(i)} \bar{z} = 0$ which gives rise to the following definition.
\begin{definition}
Consider a linear differential operator $\AA$ as in~\eqref{eq:operator def}. Its \emph{wave cone} $\Lambda$ is defined as the set 
of all $\bar{z}\in\R^d\setminus\{0\}$ for which there exists $\xi\in\R^N\setminus\{0\}$ such that
\begin{equation*}
z(x)=h(x\cdot\xi)\bar{z}
\end{equation*}
satisfies $\AA  z =0$ for any choice of profile function $h: \R\to\R$.

Equivalently, $\bar{z}\in\Lambda$ if and only if $\bar{z}\neq0$ and there exists $\xi\in\R^N\setminus\{0\}$ such that $\AAA (\xi)\bar{z}=0$.
\end{definition}
The claimed equivalence can be easily verified by taking the Fourier transform of $\AA z=0$. It can be seen immediately that $\Lambda$ is relatively closed in $\R^d\setminus\{0\}$.
In other words, the wave cone $\Lambda$ characterizes the directions of one dimensional oscillations compatible with \eqref{eq:operator def}.

Let us give an equivalent reformulation of the constraint~\eqref{eq:operator def} and accordingly yet another characterization of the wave cone. Observe that
\begin{equation*}
\begin{aligned}
\sum_{i=1}^NA^{(i)}\frac{\partial z}{\partial x_i}&=\left(\sum_{i=1}^N\sum_{k=1}^dA_{jk}^{(i)}\frac{\partial z_k}{\partial x_i}\right)_{j=1,\ldots,l}\\
&=\left(\sum_{i=1}^N\frac{\partial}{\partial x_i}\sum_{k=1}^dA_{jk}^{(i)}z_k\right)_{j=1,\ldots,l}.
\end{aligned}
\end{equation*}
Therefore, if we define the $l\times N$-matrix $Z_\AA$ by
\begin{equation}\label{eq:defZ}
(Z_\AA)_{ji}=\sum_{k=1}^dA_{jk}^{(i)}z_k,\qquad j=1,\ldots,l,\qquad i=1,\ldots,N,
\end{equation}
then~\eqref{eq:operator def} can be rewritten as
\begin{equation}\label{eq:divfree}
\div Z_\AA=0.
\end{equation}
Moreover, the condition $\AAA(\xi)\bar{z}=0$ from the definition of the wave cone translates to $\bar{Z}_{\AA} \xi=0$ (where $\bar{Z}_\AA$ is obtained from $\bar{z}$ via~\eqref{eq:defZ}), so that the following are equivalent: 
\begin{enumerate}
\item $\bar{z}\in\Lambda$;
\item $\bar{z}\neq0$ and ${\rm rank}\, \bar{Z}_\AA<N$.
\end{enumerate}
It follows immediately that $\Lambda=\R^d\setminus\{0\}$ if $l<N$.

\subsection{$\AA$-free rigidity} \label{s:robustness}

In this section we prove a generalization of the well-known rigidity result of Ball and James~\cite{BaJa} to the $\AA$-free framework. Besides its possible independent interest, its application to the system~\eqref{eq:operator def} will guarantee a form of compensated compactness that allows us to construct a measure-valued solution to the compressible Euler system which is not generated by a sequence of weak solutions. 

\begin{theorem} \label{lemma1}

Let $\Omega\subset\R^N$ be a domain, $\AA$ a linear operator of the form~\eqref{eq:operator def}, and $1<p<\infty$. Let moreover $z_n : \Omega \to \R^{d}$ be a family of functions with
\[
\| z_n \|_{L^p(\Omega; \R^{d})} \leq c,
\]
\begin{equation} \label{H1}
\AA z_n=0 \ \mbox{in}\ \mathcal{D}'(\Omega),
\end{equation}
and suppose $(z_n)$ generates a compactly supported Young measure
\[
\nu_{x} \in \mathcal{M}^1 (\R^{d})
\]
such that
\begin{equation} \label{H2}
{\rm supp} [ \nu_{x} ] \subset \{ \lambda \bar{z}_1 + (1 - \lambda) \bar{z}_2),\ \lambda \in [0,1] \} \ \mbox{for a.a. } x \in \Omega
\end{equation}
and for some given constant states $\bar{z}_1, \bar{z}_2 \in \R^{d}$, $\bar{z}_1\neq \bar{z}_2$.
Suppose that
\[
\bar{z}_2 - \bar{z}_1 \not \in \Lambda.
\]
Then
\[
z_n \to z_\infty \ \mbox{ in} \ L^p(\Omega),
\]
which implies that
\[
\nu_{x} = \delta_{z_{\infty}(x)} , \ z_{\infty}(x) \in \{ \lambda \bar{z}_1 + (1 - \lambda) \bar{z}_2),\ \lambda \in [0,1] \} \ \mbox{for a.a.}\ x \in \Omega.
\]
More specifically, $z_\infty$ is a constant function of the form
\[
z_\infty = \overline{\lambda} \bar{z}_1 + (1 - \overline{\lambda}) \bar{z}_2.
\]
for some fixed $\overline{\lambda} \in [0,1]$.

\end{theorem}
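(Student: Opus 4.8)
The plan is to reduce the problem to a scalar one adapted to the single admissible oscillation direction $e := \bar{z}_2 - \bar{z}_1 \neq 0$ and then to kill the oscillation by exploiting $e \notin \Lambda$. First I would fix a linear functional $\ell : \R^d \to \R$ with $\ell(e) = 1$ and set $P := \mathrm{Id} - e \otimes \ell$, so that every $z$ splits as $z = \bar{z}_1 + \ell(z - \bar{z}_1)\,e + P(z - \bar{z}_1)$. Writing $\lambda_n := \ell(z_n - \bar{z}_1)$ for the longitudinal part and $\rho_n := P(z_n - \bar{z}_1)$ for the transverse part, hypothesis \eqref{H2} says that $\nu_x$ is carried by the line $\{\bar{z}_1 + \lambda e\}$, so the push-forward of $\nu_x$ under $z \mapsto P(z - \bar{z}_1)$ is $\delta_0$; hence $\rho_n \to 0$ in measure, while the push-forward $\sigma_x$ of $\nu_x$ onto the parameter $\lambda \in [0,1]$ carries all remaining information. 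In these coordinates the theorem becomes: $\lambda_n$ converges strongly in $L^p$ to a constant and $\rho_n \to 0$ strongly.

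The algebraic input is the following reading of $e\notin\Lambda$. By the equivalence established in Section~\ref{sec:wave}, $e \notin \Lambda$ means exactly that the $l \times N$ matrix $\bar{Z}_{\AA}$ built from $e$ via \eqref{eq:defZ} has rank $N$ (equivalently $\AAA(\xi)e \neq 0$ for every $\xi \neq 0$). Since $\AA \bar{z}_1 = 0$ and $\AA z_n = 0$, applying $\AA$ to the splitting gives $\sum_i (A^{(i)} e)\,\partial_i \lambda_n = -\AA \rho_n$, that is $\bar{Z}_{\AA}\nabla \lambda_n = -\AA \rho_n$ in $\Dcal'(\Omega)$. As $\bar{Z}_{\AA}$ has full column rank it admits a left inverse $M$, whence $\nabla \lambda_n = -M\AA\rho_n$; on the Fourier side this reads $\xi\,\widehat{\lambda_n} = -M\AAA(\xi)\widehat{\rho_n}$, so after dividing by $|\xi|^2$ the longitudinal part is recovered from the transverse part through a homogeneous degree-zero (Calderón--Zygmund) multiplier $T$ that is bounded on $L^p$. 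Two consequences: passing to the weak limit gives $\bar{Z}_{\AA}\nabla\bar{\lambda} = 0$, hence $\nabla\bar{\lambda} = 0$ and $z_\infty = \bar{z}_1 + \bar{\lambda}\, e$ is a single constant state of the asserted form; and the strong convergence of the whole sequence reduces to strong $L^p$-convergence (absence of concentration) of the transverse part $\rho_n$.

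To show that $\nu_x$ is in fact a Dirac mass I would use a compensated-compactness inequality for a well-chosen quadratic form. Since $\Lambda$ is closed and $e \notin \Lambda$, on the compact set $\Lambda \cap \{|\zeta|=1\}$ the transverse size $|P\zeta|$ is bounded below by some $\delta > 0$; hence for $C$ large enough the quadratic form $Q(\zeta) := C\,|P\zeta|^2 - |\ell(\zeta)|^2$ satisfies $Q \ge 0$ on all of $\Lambda$ while $Q(e) = -1 < 0$. Such a $Q$ is $\Lambda$-convex, hence weakly lower semicontinuous along $\AA$-free sequences (Tartar's quadratic theorem, which requires no constant-rank assumption and can be proved self-containedly by localizing and applying Plancherel to the symbol), giving $\langle \nu_x, Q\rangle \ge Q(\langle\nu_x,\mathrm{id}\rangle)$ for a.e.\ $x$. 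Restricting $Q$ to the segment, where it is a quadratic in $\lambda$ with leading coefficient $Q(e)$, yields $\langle\nu_x,Q\rangle - Q(\langle\nu_x,\mathrm{id}\rangle) = Q(e)\,\mathrm{Var}(\sigma_x) = -\mathrm{Var}(\sigma_x)$; the inequality then forces $\mathrm{Var}(\sigma_x) = 0$, so $\sigma_x = \delta_{\bar{\lambda}}$ and $\nu_x = \delta_{z_\infty(x)}$.

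The step I expect to be the main obstacle is precisely the upgrade from convergence in measure to convergence in $L^p$, that is, ruling out concentrations: an $L^p$-bounded $\AA$-free sequence may a priori develop spikes on sets of vanishing measure, where $z_n$ leaves the segment entirely, and such spikes are invisible to the classical Young measure $\nu_x$. They would both break the asserted $L^p$-convergence and add an a priori unsigned defect measure that spoils the naive use of the quadratic lower-semicontinuity, since the relevant $Q$ is indefinite. The resolution I would pursue is to invoke the non-characteristic relation $\nabla\lambda_n = -M\AA\rho_n$ once more: a concentrating transverse profile, obtained by a blow-up around a concentration point, would be a nonzero $\AA$-free field $\Phi \in L^p(\R^N)$ with $\widehat{\Phi}(\xi) \in \ker\AAA(\xi)$ and purely transverse values; since $\AAA(\xi)e \neq 0$ for every $\xi\neq0$, this is incompatible and forces the concentration part to vanish. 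Equivalently, one decomposes $z_n$ into a $p$-equiintegrable $\AA$-free part, to which the Fourier argument applies cleanly, and a remainder tending to zero in measure which the rank condition forces to zero in $L^p$.
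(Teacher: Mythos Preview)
Your decomposition along $e=\bar z_2-\bar z_1$ and the use of the full column rank of $\bar Z_{\mathcal A}$ (equivalent to $e\notin\Lambda$) to left-invert the relation $\bar Z_{\mathcal A}\nabla\lambda_n=-\mathcal A(\text{remainder})$ is exactly the algebraic core of the paper's argument. The execution, however, diverges, and the paper's route is considerably shorter: it uses no Calder\'on--Zygmund multipliers, no Tartar quadratic form, and no separate step showing that $\nu_x$ is a Dirac. Instead it first shows, via a smooth cutoff $F_\delta$ vanishing on a neighbourhood of the segment together with a diagonal argument in $\delta$, that one may write $z_n=e_n+\lambda_n\bar z_1+(1-\lambda_n)\bar z_2$ with $e_n\to 0$ \emph{strongly} in $L^p$ and $0\le\lambda_n\le 1$ (note: $\lambda_n$ is chosen \emph{bounded}, unlike your $\lambda_n=\ell(z_n-\bar z_1)$). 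The constraint then reads $(\bar Z_2-\bar Z_1)\nabla\lambda_n=\operatorname{div}E_n\to 0$ in $W^{-1,p}$; left-inverting and applying Ne\v cas' lemma gives $\lambda_n-\langle\lambda_n\rangle\to 0$ in $L^p$, hence $z_n\to\text{const}$ in $L^p$, and the Dirac structure of $\nu_x$ is an immediate consequence rather than a separate argument. Your quadratic-form computation, while correct in spirit, is therefore redundant once strong convergence of the pieces is in hand.

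On the issue you rightly single out as the obstacle---ruling out concentrations---your proposed fixes do not close the gap. The blow-up profile has no reason to be ``purely transverse'': your $\lambda_n$ is unbounded and may concentrate alongside $\rho_n$, so the condition $\mathbb A(\xi)e\ne 0$ alone yields no contradiction. The alternative you mention, decomposing $z_n$ into a $p$-equiintegrable $\mathcal A$-free part plus a remainder small in measure, is precisely the Fonseca--M\"uller projection lemma, which requires the constant-rank hypothesis you are trying to avoid. And invoking the Young-measure form of Tartar's inequality $\langle\nu_x,Q\rangle\ge Q(\langle\nu_x,\mathrm{id}\rangle)$ only relocates the difficulty, since that inequality itself needs $2$-equiintegrability of $z_n$ (in particular $p\ge 2$), which is what you set out to prove. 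The paper's device is instead to force $\lambda_n\in[0,1]$ from the outset, so that any far-from-segment mass is pushed entirely into the error $e_n$, and then to argue $\|F_\delta(z_n)z_n\|_{L^p}\to 0$ directly from the Young-measure hypothesis; if you want a self-contained argument without constant rank, this is the step to aim for rather than to circumvent.
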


\begin{remark}
Note that we are assuming neither $p$-equiintegrability of the sequence nor the constant rank property. If, however, the constant rank property holds, then hypothesis~\eqref{H1} can be relaxed to 
\begin{equation*} 
\AA z_n\to 0 \ \mbox{in}\ W^{-1,p}(\Omega)
\end{equation*}
using Lemma 2.14 from~\cite{FoMu}.
\end{remark}

The rest of the Section is devoted to the proof of Theorem \ref{lemma1} which will be split in two steps corresponding
to as many subsections.

\subsubsection{The specific form of $z_n$}

Let us start with a preliminary remark. 
Since we already know that
\[
z_n \rightharpoonup z_\infty \ \mbox{weakly in}\ L^p(\Omega; \R^{d}),
\]
it is enough to show that $\{z _n \}_{n \geq 1}$ contains a subsequence converging in $L^p(\Omega)$.

Moreover, as the result is local, it is enough to show the claim on any set
\[
\Omega_\delta = \{x\in\Omega: {\rm dist} [ x, \partial \Omega ] > \delta\},\ \delta > 0.
\]
In other words, we may assume, without loss of generality, that
(\ref{H1}) holds in an open set containing $\overline{\Omega}$.

Now, we aim to prove the following claim.\\ \\
CLAIM: The functions $z_n$ have the following specific form
\begin{equation} \label{claim1}
z_n(x) = {e}_n(x) + \lambda_n (x) \bar{z}_1 + (1 - \lambda_n(t,x) ) \bar{z}_2 ,
\end{equation}
where
\[
\ e_n \to 0 \ \mbox{in}\ L^p(\Omega; \R^{d}) \;  \mbox{as } n\to\infty,
\]
and $\lambda_n$ are bounded measurable functions
\[
0 \leq \lambda_n \leq 1 \ \mbox{a.e. in}\ \Omega.
\]

\begin{proof}[Proof of the CLAIM]
Let $\delta > 0$ be an arbitrary positive constant. Then there is a function
\[
F_\delta \in C^\infty (\R^{d}),\ 0 \leq F_\delta \leq 1 
\]
such that
\[
F_\delta = 0 \ \mbox{in an open neighborhood of the segment}\ \left\{ \lambda \bar{z}_1 + (1 - \lambda) \bar{z}_2,\ \lambda \in [0,1] \right\}
\]
\[
F_{\delta}(\bar{z}) = 1 \ \mbox{if}\ \ {\rm dist} \left[ \bar{z} ; \left\{ \lambda \bar{z}_1 + (1 - \lambda) \bar{z}_2,\ \lambda \in [0,1] \right\} \right] > \delta .
\]
Next, we can simply rewrite $z_n$ as
\[
z_n = F_{\delta} (z_n) z_n + (1 - F_\delta (z_n ) )z_n.
\]
Of course, we have
\[
\left\| F_{\delta} (z_n) z_n \right\|_{L^p(\Omega; \R^{d})} \leq \left\| z_n \right\|_{L^p(\Omega; \R^{d})},
\]
and, by virtue of hypothesis \eqref{H2}, we also have
\[
\left\| F_{\delta} (z_n) z_n \right\|_{L^p(\Omega; \R^{d})} \to 0 \ \mbox{as}\ n \to \infty, \ \mbox{for any}\ \delta > 0.
\]

Finally, we can write
\[
(1 - F_\delta (z_n) (x) ) z_n(x) =
\]
\[
(1 - F_\delta (z_n) (x) ) \left( z_n(x) - \lambda_{n,\delta} (x) \bar{z}_1 - (1 - \lambda_{n,\delta} (x) ) \bar{z}_2 \right)
\]
\[
- F_\delta (z_n) (x)  \left( \lambda_{n,\delta} (x) \bar{z}_1 + (1 - \lambda_{n,\delta} (x) ) \bar{z}_2 \right) 
\]
\[
 + \left( \lambda_{n,\delta} (x) \bar{z}_1 + (1 - \lambda_{n,\delta} (x) ) \bar{z}_2 \right)
\]
for certain
\[
0 \leq \lambda_{n,\delta} \leq 1,
\]
where
\[
\left| (1 - F_\delta (z_n) (x) ) \left( z_n(x) - \lambda_{n,\delta} (x) \bar{z}_1 - (1 - \lambda_{n,\delta} (x) ) \bar{z}_2 \right) \right|
\leq \delta
\]
and
\[
\left\| F_\delta (z_n) (x) ) \left( \lambda_{n,\delta} (x) \bar{z}_1 + (1 - \lambda_{n,\delta} (x) ) \bar{z}_2 \right) \right\|_{L^p(\Omega; \R^{d})} \to 0
\ \mbox{as}\ n \to \infty
\]
for any fixed $\delta > 0$.

As $\delta > 0$ can be taken arbitrarily small, \eqref{claim1} and the claim follows.
\end{proof}

\subsubsection{Constraint imposed by equations}

We use hypothesis \eqref{H1} in the form~\eqref{eq:divfree} taking advantage of the specific form of $z_n$ proved in (\ref{claim1}). We employ here the map~\eqref{eq:defZ} from vectors in $\R^d$ to matrices in $\R^{l\times N}$ and denote by a capital letter the value of the map at the corresponding lowercase letter, e.g.\ $e_n$ is mapped to $E_n$ according to~\eqref{eq:defZ}. 

Hence by~\eqref{H1} we have
\[
0 =
\div  E_n(x)  + \div  [\lambda_n(x) ( \bar{Z}_1 - \bar{Z}_2 ) ]
\]
\[
=\div  E_n(x)  -(\bar{Z}_2 - \bar{Z}_1 )  \nabla \lambda_n(x) \ \mbox{in}\ \mathcal{D}'(\Omega).
\]

Now, using our preliminary remark that the equations are satisfied in an open neighborhood of $\overline{\Omega}$, we may use a regularization $v \mapsto (v)_\ep$ by means of a suitable
family of convolution kernels to obtain
\begin{equation} \label{pom1}
 (\bar{Z}_2 - \bar{Z}_1) \nabla (\lambda_n)_\ep  = \div  (E_n)_\ep \equiv \chi_{n, \ep} ,
\end{equation}
where
\begin{equation} \label{pom2}
\| \chi_{n, \ep} \|_{W^{-1,p}(\Omega; \R^{d})} \leq c_1(n) \to \ 0 \ \mbox{as} \ n \to \infty \ \mbox{independently of}\ \ep.
\end{equation}
Next we make use of the fact that $\bar{z}_2 - \bar{z}_1 \not \in \Lambda$, and hence $\bar{Z}_2 - \bar{Z}_1 $ has full rank (recall the discussion in Section~\ref{sec:wave}) and thus possesses a left inverse. Then it follows from \eqref{pom1}--\eqref{pom2} that
\[
\left\|  \nabla (\lambda_n - \left< \lambda_n \right> )_\ep \right\|_{W^{-1,p}(\Omega; \R^{d})} \leq c_2(n)\to 0;
\]
whence
\[
\left\| \nabla (\lambda_n - \left< \lambda_n \right> )\right\|_{W^{-1,p}(\Omega; \R^{d})} \leq c_2(n);
\]
and finally (by Ne\v cas' Lemma),
\[
\left\| \lambda_n - \left< \lambda_n \right> \right\|_{L^p(\Omega; \R^{d})} \to 0 \ \mbox{as}\ n \to \infty
\]
where
$$\langle \lambda_n\rangle:= \frac{1}{|\Omega|} \int_\Omega \lambda_n(x)  \dx.$$
Going back to \eqref{claim1} we obtain the desired strong convergence of $\left\{ z_n \right\}_{n \geq 0}$ to a constant function. 
We have proved Theorem \ref{lemma1}.

\section{Application to the compressible Euler equations}\label{sec:application}
\subsection{A necessary condition for generability by weak solutions}\label{sec:proof1}

With $t = x_0$ the linear system \eqref{eq:LS} related to subsolutions of the compressible Euler equations can be rewritten in the form \eqref{eq:operator def}. More precisely, keeping in mind that $U$ is a symmetric traceless $3\times 3$ matrix, we define the state vector
\begin{equation*}
z := (\rho,m_1,m_2,m_3,U_{11}, U_{12}, U_{13}, U_{22}, U_{23},q) \in \R^{10}.
\end{equation*}
Accordingly, the $4 \times 10$ matrices $A^{(i)}_L$ for $i=0,...,3$ have the following form
\begin{equation}\label{eq:A0}
A^{(0)}_L = \left(
\begin{array}{cccccccccc}
	1 & 0 & 0 & 0 & 0 & 0 & 0 & 0 & 0 & 0 \\
	0 & 1 & 0 & 0 & 0 & 0 & 0 & 0 & 0 & 0 \\
	0 & 0 & 1 & 0 & 0 & 0 & 0 & 0 & 0 & 0 \\
	0 & 0 & 0 & 1 & 0 & 0 & 0 & 0 & 0 & 0
\end{array}
\right)
\end{equation}
\begin{equation}
A^{(1)}_L = \left(
\begin{array}{cccccccccc}
	0 & 1 & 0 & 0 & 0 & 0 & 0 & 0 & 0 & 0 \\
	0 & 0 & 0 & 0 & 1 & 0 & 0 & 0 & 0 & 1 \\
	0 & 0 & 0 & 0 & 0 & 1 & 0 & 0 & 0 & 0 \\
	0 & 0 & 0 & 0 & 0 & 0 & 1 & 0 & 0 & 0
\end{array}
\right)
\end{equation}
\begin{equation}
A^{(2)}_L = \left(
\begin{array}{cccccccccc}
	0 & 0 & 1 & 0 & 0 & 0 & 0 & 0 & 0 & 0 \\
	0 & 0 & 0 & 0 & 0 & 1 & 0 & 0 & 0 & 0 \\
	0 & 0 & 0 & 0 & 0 & 0 & 0 & 1 & 0 & 1 \\
	0 & 0 & 0 & 0 & 0 & 0 & 0 & 0 & 1 & 0
\end{array}
\right)
\end{equation}
\begin{equation}\label{eq:A3}
A^{(3)}_L = \left(
\begin{array}{cccccccccc}
	0 & 0 & 0 & 1 & 0 & 0 & 0 & 0 & 0 & 0 \\
	0 & 0 & 0 & 0 & 0 & 0 & 1 & 0 & 0 & 0 \\
	0 & 0 & 0 & 0 & 0 & 0 & 0 & 0 & 1 & 0 \\
	0 & 0 & 0 & 0 & -1 & 0 & 0 & -1 & 0 & 1
\end{array}
\right).
\end{equation}

With this notation, the system \eqref{eq:LS} takes the form $\AA_{L} z := \sum_{i=0}^3 A^{(i)}_L \frac{\de z}{\de x_i} = 0$. We can prove the following lemma.

\begin{lemma}\label{l:constant rank}
The operator $\AA_L$ defined as $\AA_{L} z := \sum_{i=0}^3 A^{(i)}_L \frac{\de z}{\de x_i}$,
 with the choice of matrices $A^{(i)}_L$ given by \eqref{eq:A0}--\eqref{eq:A3}, has the constant rank property with $r = 4$.
\end{lemma}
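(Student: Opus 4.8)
The plan is to verify the constant rank property by direct computation. Since here $l=N=4$, the matrix $\AAA_L(w)=\sum_{i=0}^3 w_i A^{(i)}_L$ is $4\times 10$ and its rank is at most $4$; thus asserting constant rank $r=4$ is the same as showing that $\AAA_L(w)$ has full row rank for every $w\in\mathcal{S}^{3}$, equivalently for every $w\neq0$, equivalently that its four rows are linearly independent. First I would assemble $\AAA_L(w)$ explicitly from~\eqref{eq:A0}--\eqref{eq:A3}. Ordering the columns as $(\rho,m_1,m_2,m_3,U_{11},U_{12},U_{13},U_{22},U_{23},q)$ one obtains
\[
\AAA_L(w)=
\begin{pmatrix}
w_0 & w_1 & w_2 & w_3 & 0 & 0 & 0 & 0 & 0 & 0\\
0 & w_0 & 0 & 0 & w_1 & w_2 & w_3 & 0 & 0 & w_1\\
0 & 0 & w_0 & 0 & 0 & w_1 & 0 & w_2 & w_3 & w_2\\
0 & 0 & 0 & w_0 & -w_3 & 0 & w_1 & -w_3 & w_2 & w_3
\end{pmatrix},
\]
where the two $-w_3$ entries in the last row encode the trace-free constraint on $U$. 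I would then split the verification according to whether the time-component $w_0$ vanishes.

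If $w_0\neq0$, the four columns belonging to $(\rho,m_1,m_2,m_3)$ already form an upper-triangular $4\times4$ block with diagonal entries $w_0$, whose determinant equals $w_0^4\neq0$. Hence $\AAA_L(w)$ has rank $4$ and nothing further is required.

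The case $w_0=0$, in which $(w_1,w_2,w_3)\neq0$ and the momentum block degenerates, is the only point I expect to require care. Here the first row is the unique row with nonzero entries in the momentum columns $m_1,m_2,m_3$, and it is nonzero precisely because $(w_1,w_2,w_3)\neq0$; it is therefore automatically independent of the remaining rows, which are supported on the $(U,q)$-columns. It then remains to show that the rows $R_2,R_3,R_4$, restricted to the columns $(U_{11},U_{12},U_{13},U_{22},U_{23},q)$, are linearly independent. Writing $aR_2+bR_3+cR_4=0$ and reading off coefficients column by column gives, among others, $aw_1-cw_3=0$ and $aw_3+cw_1=0$ from the $U_{11}$- and $U_{13}$-columns, a $2\times2$ system in $(a,c)$ of determinant $w_1^2+w_3^2$. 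When $w_1^2+w_3^2>0$ this forces $a=c=0$, and then the $U_{12}$-, $U_{22}$-, $U_{23}$-columns yield $b(w_1,w_2,w_3)=0$, hence $b=0$; in the degenerate sub-case $w_1=w_3=0$ (so $w_2\neq0$) the same three columns directly give $a=b=c=0$. Thus $R_2,R_3,R_4$ are independent and $\AAA_L(w)$ has rank $4$ also for $w_0=0$, which completes the proof. The main obstacle is precisely this book-keeping in the degenerate directions, where the trace-free structure of $U$ reflected by the $-w_3$ entries must be tracked correctly.
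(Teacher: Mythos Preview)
Your proof is correct and is precisely a spelled-out version of what the paper does: the paper's own proof consists of the single sentence ``This follows from an easy linear algebra computation,'' and your case split on $w_0$ together with the row-independence check for $R_2,R_3,R_4$ is exactly such a computation carried out in detail.
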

\begin{proof}
This follows from an easy linear algebra computation.
\end{proof}

The constant rank property is crucial to us since it allows to exploit the theory of Fonseca and M\" uller for our linear system \eqref{eq:LS}. Recall once more that a sequence $\{z_n\}$ is called \emph{$p$-equiintegrable} if the sequence $\{|z_n|^p\}$ is equiintegrable. Here, $p$-equi-integrability refers to both variables $t$ and $x$. Then, by virtue of Theorem \ref{t:T1} and Lemma~\ref{l:constant rank}, we can finally prove the following theorem, which is one of our main results:

\begin{theorem}\label{t:main1}
Suppose the pressure function satisfies $c\rho^\gamma\leq p(\rho)\leq C\rho^\gamma$ for some $\gamma\geq 1$ and $\{(\rho_n,v_n)\}$ is a sequence of weak solutions to the compressible Euler system \eqref{eq:CE} such that $\{\rho_n\}$ is $\gamma$-equiintegrable and $\{\sqrt{\rho_n}v_n\}$ is 2-equiintegrable. Suppose moreover $\{(\rho_n,\sqrt{\rho_n}v_n)\}$ generates a Young measure $\nu$ on $\R^+\times\R^3$. Then $\nu$ is a measure-valued solution to the compressible Euler system \eqref{eq:CE} and the lifted measure $\tilde{\nu}$ on $\R^+\times\R^3\times\mathcal{S}^3_0\times\R^+$ satisfies
\begin{equation}\label{eq:Jensen3}
\langle \tilde{\nu}_{t,x}, g \rangle \geq g(\langle \tilde{\nu}_{t,x},{\rm id}\rangle)
\end{equation}
 for all $\AA_{L}$-quasiconvex functions $g$.
\end{theorem}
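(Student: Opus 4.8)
The plan is to verify the three conditions of Theorem~\ref{t:T1} for the lifted measure $\tilde\nu$ and the operator $\AA_L$, exploiting Lemma~\ref{l:constant rank}, which guarantees the constant rank property needed to apply the Fonseca--M\"uller machinery. The first step is to associate to the given sequence of weak solutions $\{(\rho_n, v_n)\}$ a natural sequence in the subsolution variables. Following the lift, I set $z_n := Q(\rho_n, \sqrt{\rho_n}v_n)$, that is,
\begin{equation*}
z_n = \left(\rho_n,\ \rho_n v_n,\ \rho_n v_n \otimes v_n - \tfrac13 |\sqrt{\rho_n}v_n|^2 \I,\ p(\rho_n) + \tfrac13 |\sqrt{\rho_n}v_n|^2\right).
\end{equation*}
Since each $(\rho_n, v_n)$ is a weak solution of \eqref{eq:CE}, a direct inspection shows that $z_n$ is a weak (hence distributional) solution of the linear system \eqref{eq:LS}, i.e.\ $\AA_L z_n = 0$. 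By construction $\{z_n\}$ generates exactly the lifted Young measure $\tilde\nu$: for continuous $f$ one has $f(z_n) = (f\circ Q)(\rho_n,\sqrt{\rho_n}v_n)$, so the generation property for $\{(\rho_n,\sqrt{\rho_n}v_n)\}$ transfers to $\{z_n\}$ via the definition of the lift.

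The second step is to check the integrability and equi-integrability hypotheses. Each block of $z_n$ is controlled by $\rho_n$ and $\sqrt{\rho_n}v_n$: the growth assumption $c\rho^\gamma \le p(\rho)\le C\rho^\gamma$ together with $\gamma$-equi-integrability of $\{\rho_n\}$ handles the density and pressure blocks, while the $2$-equi-integrability of $\{\sqrt{\rho_n}v_n\}$ handles the momentum block $\rho_n v_n = \sqrt{\rho_n}\cdot\sqrt{\rho_n}v_n$ and the quadratic blocks $\sqrt{\rho_n}v_n\otimes\sqrt{\rho_n}v_n$. One must choose a single exponent $p$ making $\{z_n\}$ $p$-equi-integrable; the quadratic terms are the binding constraint, so taking into account that $|\sqrt{\rho_n}v_n\otimes\sqrt{\rho_n}v_n|\sim|\sqrt{\rho_n}v_n|^2$ is $1$-equi-integrable, and $\rho_n$ is $\gamma$-equi-integrable with $\gamma\ge 1$, one sees $p=1$ suffices for the components to be equi-integrable, yielding the $L^1$-bound required in condition (ii). This simultaneously yields condition (i): the weak limit $z := \langle\tilde\nu,\mathrm{id}\rangle$ inherits $\AA_L z = 0$ because $\AA_L$ is linear and $z_n \rightharpoonup z$, and equi-integrability upgrades the weak limit to the barycenter of $\tilde\nu$.

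With (i) and (ii) established, the ``only if'' direction of Theorem~\ref{t:T1} applies directly: since $\{z_n\}$ is a $p$-equi-integrable, $\AA_L$-free sequence generating $\tilde\nu$, condition (iii) holds automatically, which is precisely the Jensen inequality \eqref{eq:Jensen3},
\begin{equation*}
\langle\tilde\nu_{t,x}, g\rangle \ge g(\langle\tilde\nu_{t,x},\mathrm{id}\rangle)
\end{equation*}
for all $\AA_L$-quasiconvex $g$ of $p$-growth. One remark: the theorem is stated for families indexed by a single spatial variable, but here the parameter is $(t,x)\in[0,T]\times\Omega$; since the time direction enters \eqref{eq:LS} on equal footing as a derivative variable $x_0 = t$, this is merely a relabeling of $N = 4$ coordinates and causes no difficulty. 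Finally, that $\nu$ itself is a measure-valued solution of \eqref{eq:CE} follows by passing to the limit in the weak formulation satisfied by each $(\rho_n, v_n)$, using the generation property to identify each averaged nonlinearity $\overline{\rho}, \overline{\rho v}, \overline{\rho v\otimes v}, \overline{p(\rho)}$ as the corresponding moment of $\nu$.

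I expect the main obstacle to be the bookkeeping in the equi-integrability step: one must confirm that a \emph{single} exponent $p>1$ (as required by the strict inequality $1<p<\infty$ implicit in applying the machinery, or at least $p\ge 1$ in Theorem~\ref{t:T1}) works uniformly across all ten components of $z_n$ despite their differing homogeneities in $(\rho_n, v_n)$, and to reconcile the hypotheses ($\gamma$-equi-integrability of $\rho_n$ versus $2$-equi-integrability of $\sqrt{\rho_n}v_n$) into the product estimates for the mixed blocks $\rho_n v_n$ and $\rho_n v_n\otimes v_n$. Everything else is either linear algebra (already dispatched by Lemma~\ref{l:constant rank}) or a direct citation of Theorem~\ref{t:T1}.
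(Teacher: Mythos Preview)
Your proposal is correct and follows essentially the same route as the paper: lift the weak solutions to the subsolution variables $z_n = Q(\rho_n,\sqrt{\rho_n}v_n)$, observe that $\{z_n\}$ is $\AA_L$-free and generates $\tilde\nu$, and then invoke the ``only if'' direction of Theorem~\ref{t:T1} (using Lemma~\ref{l:constant rank} for the constant rank hypothesis) to obtain the Jensen inequality. You are in fact more explicit than the paper about the equi-integrability bookkeeping; your choice $p=1$ is fine since Theorem~\ref{t:T1} is stated for $1\le p<\infty$, and the product estimates you sketch for the mixed blocks go through without difficulty.
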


\begin{remark}
The assumption on the pressure is very natural. Together with the equiintegrability assumptions it implies that the nonlinear terms $\{\rho_n v_n\otimes v_n\}$ and $\{p(\rho_n)\}$ do not concentrate, so that our classical Young measure framework is applicable. The equiintegrability assumption, however, is more difficult to justify, since it is not implied by the usual apriori estimates for~\eqref{eq:CE} related to the energy inequality. We use this assumption here for convenience in order to not overly complicate the presentation; the interested reader is referred to the paper~\cite{FoKr}, which extends the Fonseca-M\"uller theory to non-equiintegrable sequences. An application of this extension to the Euler system then implies a result similar to Theorem~\ref{t:main1} including concentration effects. We omit details.
\end{remark}
\begin{remark}
Condition~\eqref{eq:Jensen3} is rather abstract, as it involves the little understood concept of $\AA_L$-quasiconvexity. It is not even obvious whether this condition is vacuous or not. Indeed, for the incompressible Euler equations, one can obtain a similar Jensen inequality, but it turns out that the wave cone is so large that the corresponding $\AA$-quasiconvexity already implies convexity, so that~\eqref{eq:Jensen3} is tautologically satisfied. The main point of this paper is to demonstrate that this is not the case for the compressible system, which is accomplished in Subsection~\ref{sec:proof2} below. 
\end{remark}

\begin{proof}
The proof proceeds through the following steps:
\begin{itemize}
\item[Step 1:] The fact that the Young measure $\nu$ is a measure valued solution to the compressible Euler equations \eqref{eq:CE} is a direct consequence of the Fundamental Theorem of Young measures and the lack of concentrations implied by the equiintegrability assumptions.
\item[Step 2:] The sequence $\{(\rho_n,v_n)\}$ naturally gives rise to a sequence of weak 
solutions $\{z_n\} = \{\rho_n,m_n,U_n,q_n\}$ to the linear system \eqref{eq:LS}, by defining 
$$m_n = \rho_n v_n,\quad U_n := \rho_n v_n \otimes v_n - \frac 13 \rho_n\abs{v_n}^2\I, \quad q_n := p(\rho_n) + \frac 13 \rho_n\abs{v_n}^2$$
where $\I$ is the $3\times3$ identity matrix.
\item[Step 3:] We show that the lifted measure $\tilde{\nu}$ is generated by the sequence $\{z_n\}$. This is an easy consequence of the definition of the lifted measure and the fact that $\nu$ is generated by $\{(\rho_n,\sqrt{\rho_n}v_n)\}$:
\begin{align*}
\int_0^T\int_\Omega\langle\tilde{\nu}_{t,x},g(t,x,\cdot)\rangle \varphi(x,t) \dx\dt &= \int_0^T\int_\Omega\langle\nu_{t,x},(g\circ Q)(t,x,\cdot)\rangle \varphi(x,t)\dx\dt \\
 &= \lim_{n \sil \infty}\int_0^T\int_\Omega g(t,x,Q(\rho_n,\sqrt{\rho_n}v_n)(t,x)) \varphi(x,t) \dx\dt \\
&= \lim_{n \sil \infty}\int_0^T\int_\Omega g(t,x,z_n(t,x))  \varphi(x,t) \dx\dt
\end{align*}
for all test functions $\varphi$.
\item[Step 4:] Applying Theorem \ref{t:T1} with the choice $\mu=\tilde{\nu}$, we obtain that $\tilde{\nu}$ is a measure valued solution to the linear system \eqref{eq:LS} and it satisfies the Jensen inequality \eqref{eq:Jensen3}.
\end{itemize}
\end{proof}

\subsection{An explicit example}\label{sec:proof2}


In this final section we aim to show that there exists a measure-valued solution of~\eqref{eq:CE} which is not generated by a sequence of weak solutions, which also shows that the Jensen condition~\eqref{eq:Jensen3} is not vacuous. To this end, let us study the wave cone for our linear system \eqref{eq:LS}.
By virtue of~\eqref{eq:defZ}, to each state vector 
\begin{equation*}
z := (\rho,m_1,m_2,m_3,U_{11}, U_{12}, U_{13}, U_{22}, U_{23},q) \in \R^{10} 
\end{equation*}
we associate the $4\times 4$ matrix $Z_{\AA_L}$ given as
\begin{equation} \label{eq:matrixZ}
 Z_{\AA_L} = \left[ \begin{array}{cccc} 
\rho & m_1 & m_2 & m_3 \\ 
m_1 & U_{11} + q & U_{12} & U_{13} \\ 
m_2 & U_{12} & U_{22} + q & U_{23} 
\\ m_3 & U_{13} & U_{23} & - U_{11}- U_{22} + q 
\end{array} \right].
\end{equation}
Hence, the wave cone for the operator $\AA_L$ is equal to
$$\Lambda_L= \Big\{ \bar{z} \in \R^{10} \text{ such that }\det(\bar{Z}_{\AA_L})=0\Big\}$$
Moreover, by~\eqref{eq:divfree}, the linear system \eqref{eq:LS} can be written 
$$\div_{t,x} Z_{\AA_L} =0.$$

Finally, using Theorem \ref{lemma1} and an explicit construction, we can prove the existence of a measure-valued solution
to \eqref{eq:CE} which is not generated by weak solutions:

\begin{theorem}\label{t:main2}
There exists a measure-valued solution of the compressible Euler system \eqref{eq:CE} 
which is not generated by any sequence of $L^p$-bounded weak solutions to \eqref{eq:CE} (for any choice of $p>1$).
\end{theorem}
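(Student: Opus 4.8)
The plan is to produce an explicit, spatially and temporally homogeneous measure-valued solution whose lift is a two-point measure joining constant states $\bar z_1\neq \bar z_2$ with $\bar z_2-\bar z_1\notin\Lambda_L$, and then to invoke the rigidity Theorem~\ref{lemma1} to rule out any generating sequence of weak solutions. First I would fix two positive densities $\rho_1\neq\rho_2$ and set $\xi^{(1)}=(\rho_1,\bnul)$, $\xi^{(2)}=(\rho_2,\bnul)\in\R^+\times\R^3$, i.e.\ two rest states with vanishing velocity. For a fixed weight $\theta\in(0,1)$ I define the constant Young measure
\[
\nu_{t,x}=\theta\,\delta_{\xi^{(1)}}+(1-\theta)\,\delta_{\xi^{(2)}},\qquad (t,x)\in[0,T]\times\Omega.
\]
Since $\nu$ is independent of $(t,x)$, the averaged fields $\langle\nu,\xi_0\rangle=\overline\rho$, $\langle\nu,\sqrt{\xi_0}\xi'\rangle=\bnul$, $\langle\nu,\xi'\otimes\xi'\rangle=\bnul$ and $\langle\nu,p(\xi_0)\rangle=\overline{p(\rho)}$ are all constant in space-time; hence the averaged continuity and momentum equations reduce to boundary contributions that cancel against the initial data $\rho_0=\overline\rho$, $v_0=\bnul$, and $\nu$ is a genuine, non-atomic measure-valued solution of~\eqref{eq:CE}.

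Next I would identify the lifted measure $\tilde\nu=\theta\,\delta_{\bar z_1}+(1-\theta)\,\delta_{\bar z_2}$, where $\bar z_i=Q(\xi^{(i)})$, and verify that $\bar z_2-\bar z_1$ lies outside the wave cone. A direct computation from the definitions of $Q$ and of the map~\eqref{eq:defZ} shows that the matrix~\eqref{eq:matrixZ} associated to $Q(\xi)$ is
\[
Z_{\AA_L}(Q(\xi))=\left[\begin{array}{cc} \xi_0 & \sqrt{\xi_0}\,(\xi')^\top \\ \sqrt{\xi_0}\,\xi' & \xi'\otimes\xi'+p(\xi_0)\,\I \end{array}\right].
\]
For $\xi'=\bnul$ this is block diagonal, so by linearity of~\eqref{eq:defZ} the matrix corresponding to $\bar z_2-\bar z_1$ is $\mathrm{diag}(\rho_2-\rho_1,\,(p(\rho_2)-p(\rho_1))\I)$ (the second block being $3\times 3$), whose determinant equals $(\rho_2-\rho_1)\,(p(\rho_2)-p(\rho_1))^3$. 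Because $p'>0$ makes $p$ strictly increasing and $\rho_1\neq\rho_2$, this determinant is nonzero, so by the determinantal characterization $\Lambda_L=\{\bar z:\det\bar Z_{\AA_L}=0\}$ we conclude $\bar z_2-\bar z_1\notin\Lambda_L$.

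Finally I would argue by contradiction. If $\nu$ were generated by a sequence $\{(\rho_n,v_n)\}$ of $L^p$-bounded weak solutions, then, exactly as in Steps~2--3 of the proof of Theorem~\ref{t:main1}, the lifted fields
\[
z_n=\Big(\rho_n,\ \rho_n v_n,\ \rho_n v_n\otimes v_n-\tfrac13\rho_n\abs{v_n}^2\I,\ p(\rho_n)+\tfrac13\rho_n\abs{v_n}^2\Big)
\]
solve the linear system, so that $\AA_L z_n=0$ in $\mathcal{D}'(\Omega)$ (only this interior constraint, not the initial data, is used), and they generate the two-point measure $\tilde\nu$. Since $\tilde\nu$ is supported on the endpoints of the segment $[\bar z_1,\bar z_2]$ and $\bar z_2-\bar z_1\notin\Lambda_L$, Theorem~\ref{lemma1} forces $z_n$ to converge strongly and $\tilde\nu=\delta_{z_\infty}$ to be a single Dirac mass, contradicting $\theta\in(0,1)$ and $\bar z_1\neq\bar z_2$. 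This proves the theorem.

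The main obstacle is to make the lifted sequence admissible for Theorem~\ref{lemma1}: one must check that $\{z_n\}$ is bounded in $L^{p_0}(\Omega)$ for some $p_0>1$ and that it genuinely generates $\tilde\nu$. The quadratic terms $\rho_n v_n\otimes v_n=(\sqrt{\rho_n}v_n)\otimes(\sqrt{\rho_n}v_n)$ and $\rho_n\abs{v_n}^2$ degrade the integrability exponent, so securing $p_0>1$ is where the hypothesis $p>1$ together with the integrability of the pressure term enters; the compact support of $\tilde\nu$ is what keeps this under control, and, crucially, Theorem~\ref{lemma1} requires neither equiintegrability nor the constant rank property, so beyond $L^{p_0}$-boundedness no delicate concentration analysis is needed. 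As indicated in Remark~\ref{concentration}, the conclusion in fact survives even when concentration effects are allowed.
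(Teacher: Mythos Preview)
Your argument is correct and follows the same route as the paper: construct a homogeneous two-point measure-valued solution whose lift is supported on a segment with direction outside the wave cone $\Lambda_L$, then invoke Theorem~\ref{lemma1} to rule out any generating sequence. The only difference is your choice of states: you take two rest states $(\rho_i,0)$, whereas the paper uses states with common nonzero momentum $m=e_1$; your choice is in fact cleaner, since the associated matrix $\bar Z_{\AA_L}$ becomes diagonal and the determinant $(\rho_2-\rho_1)(p(\rho_2)-p(\rho_1))^3$ is manifestly nonzero by strict monotonicity of $p$, with no need to tune a parameter. Your closing discussion of the $L^{p_0}$-boundedness of the lifted sequence $z_n$ is an honest acknowledgment of a point the paper leaves implicit; under the natural reading that ``$L^p$-bounded weak solutions'' means uniform $L^p$ bounds on all quantities appearing in the weak formulation (in particular $\rho_n v_n\otimes v_n$ and $p(\rho_n)$), the required bound on $z_n$ is immediate.
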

\begin{remark}\label{concentration}
\begin{enumerate}
\item Any reasonable sequence of approximate solutions of~\eqref{eq:CE} will satisfy some uniform energy bound, so that the assumption of $L^p$-boundedness will always be met.
\item As Theorem~\ref{lemma1} did not require any equiintegrability, the statement of Theorem~\ref{t:main2} is true even when the potential generating sequence is allowed to concentrate. In the language of generalized Young measures, this means that there exists a generalized measure-valued solution which can not be generated by a sequence of weak solutions (take the measure from Theorem~\ref{t:main2} as the oscillation part and choose the concentration part arbitrarily).
\end{enumerate}
\end{remark}
\begin{proof}
The idea is to choose suitable $z^1:=(\rho^1,m^1,U^1,q^1)$ and $z^2:=(\rho^2,m^2,U^2,q^2)$ such that the homogeneous Young measure
\begin{equation*}
\tilde{\nu}=\frac{1}{2}\delta_{z^1}+\frac{1}{2}\delta_{z^2}
\end{equation*}
cannot be a limit of bounded weak solutions to the linear system \eqref{eq:LS}.

Set $\rho^1=1$, $m^1=e_1$, $U^1=\operatorname{diag}(2/3,-1/3,-1/3)$, $q^1=p(1)+\frac{1}{3}$ and, 
for some $\gamma>0$, $\rho^2=\gamma$, $m^2=e_1$, $U^2=U^1/\gamma$, $q^2=p(\gamma)+\frac{1}{3}\gamma$. 
Notice that with this choice of $z^1$ and $z^2$, $\tilde{\nu}$ arises as the lifted Young measure of some measure-valued solution $\nu$ to~\eqref{eq:CE}; indeed, this follows from
\begin{equation*}
U^1=\frac{m^1\otimes m^1}{\rho^1}-\frac{|m^1|^2}{\rho^1},\qquad q^1=p(\rho^1)+\frac{|m^1|^2}{\rho^1}
\end{equation*}
and similarly for $z^2$.

Now, observe that $\tilde{z}:=z^2-z^1$ is not in the wave cone for the operator $\AA_L$. 
Indeed, forming the corresponding $4\times4$-matrix $\tilde{Z}$ according to~\eqref{eq:defZ},
a direct calculation yields that, with our choice of $z^1$ and $z^2$, the determinant of the matrix $\tilde{Z}$ is
\begin{equation*}
\left(1-\frac{1}{\gamma}+p(1)-p(\gamma)\right)(p(1)-p(\gamma))^2,
\end{equation*}
and by the assumptions on the pressure (recall $p\geq0$, $p'>0$) it is readily seen that $\gamma$ can be chosen in such a way that this is nonzero, i.e. $\tilde{z}\notin\Lambda_L$.

Finally, by virtue of Theorem \ref{lemma1} applied with the choice $\bar{z}_1:= z^1$ and $\bar{z}_2:= z^2$, we easily see that $\tilde{\nu}$ cannot be generated
by any sequence of subsolutions. Moreover, as noted before, $\tilde{\nu}$ arises as the lifting of a measure valued solution $\nu$
to the original compressible Euler equations \eqref{eq:CE} of the form 
$$\nu=\frac{1}{2}\delta_{\left(\rho^1, \frac{m ^1}{\sqrt{\rho^1}}\right)}+\frac{1}{2}\delta_{\left(\rho^2, \frac{m ^2}{\sqrt{\rho^2}}\right)}$$
which, as a consequence, cannot be generated by any sequence of weak solutions to \eqref{eq:CE}, since this would contradict what we have shown at the level of subsolutions.
This proves Theorem \ref{t:main2}.


\end{proof}

\end{document}